\newtheorem{theorem}{Theorem}[subsection]
\newtheorem{cor}[theorem]{Corollary}
\theoremstyle{definition}
\newtheorem{df}[theorem]{Definition}
\newtheorem{rem}[theorem]{Remark}
\numberwithin{theorem}{section}
\newcommand{\mb}{\mathbb}
\newcommand{\mc}{\mathcal}
\newcommand{\mf}{\mathfrak}
\newcommand{\s}{\subset}
\newcommand{\bs}{\boldsymbol}
\begin{document}
\title[On behaviour...]{On behaviour of holomorphically contractible systems under non-monotonic sequences of sets}

\author{Arkadiusz Lewandowski}

\address{Institute of Mathematics\\ Faculty of Mathematics and Computer Science\\ Jagiellonian University\\ {\L}ojasiewicza 6,
30-348 Kraków, Poland}

\address{Science Institute\\ School of Engineering and Natural Sciences\\ University of Iceland\\ Dunhaga 3,
107 Reykjavík, Iceland}
\email{arkadiuslewandowski@gmail.com; Arkadiusz.Lewandowski@im.uj.edu.pl; arkadius@hi.is}
\thanks{}
\subjclass[2010]{Primary 32F45; Secondary 32H02}
\keywords{Invariant pseudodistances, Kobayashi pseudodistance, Carathéodory pseudodistance, pluricomplex Green function, Hausdorff distance}
\date{}

\begin{abstract}
The new results concerning the continuity of holomorphically contractible systems treated as set functions with respect to non-monotonic sequences of sets are given. In particular, continuity properties of Kobayashi and Carathéodory pseudodistances, as well as Lempert and Green functions with respect to sequences of domains converging in Hausdorff metric are delivered. 
\end{abstract}

\maketitle

\section{Introduction}
It is known that both Carathéodory and Kobayashi pseudodistances depend continuously on increasing and decreasing sequences of domains (in the latter case, adding some regularity assumptions on limiting domain; cf. [\ref{KCOIDODD}] and references therein). The pseudodistances mentioned above are particular examples of wider class of holomorphically contractible systems, i.e. systems of functions 
$$
\bs{d}_D:D\times D\rightarrow[0,+\infty),
$$
$D$ running through all domains in all $\mb{C}^n$'s, such that $\bs{d}_{\mb{D}}$ is forced to be $\bs{p}$, the hyperbolic distance on $\mb{D}$, the unit disc on the plane and all holomorphic mappings are contractions with respect to the system $(\bs{d}_D)$ (cf. Definition \ref{holocontr}). The question about the behaviour of holomorphically contractible systems under not necessarily monotonic sequences of sets seems to be natural and important. In the present note, inspired by [\ref{BKS}], we shall give a very general result stating the continuity of holomorphically contractible systems under the sequences of domains convergent with respect to Hausdorff distance (for two nonempty bounded sets $A,B$ it is defined as 
$$
\mf{H}(A,B):=\inf\{\delta>0: A\s B^{(\delta)} \text{\ and\ } B\s A^{(\delta)}\},
$$
where for a set $S$ and a positive number $\varepsilon$, the set $S^{(\varepsilon)}:=\bigcup_{s\in S}\mb{B}(s,\varepsilon)$ is the \emph{$\varepsilon$-envelope of $S$}; $\mb{B}(x,r)$ denotes the open Euclidean ball of center $x$ and radius $r$). Namely, our main result reads as follows:
\begin{theorem}\label{hd}
Let $(\bs{d}_D)$ be a holomorphically contractible system and let $D\s\mb{C}^m$ be a bounded domain. Assume that there exist two sequences $(I_n)_{n\in\mb{N}}, (E_n)_{n\in\mb{N}}$ of domains such that
$$
E_{n+1}\s\s E_n, n\in\mb{N}, \bigcap_{n\in\mb{N}}E_n=\overline{D}, I_n\s\s I_{n+1}, n\in\mb{N}, \bigcup_{n\in\mb{N}}I_n=D
$$
and such that for each $z,w\in D$ there is 
$$
\lim_{n\rightarrow \infty}\bs{d}_{E_n}(z,w)=\lim_{n\rightarrow\infty}\bs{d}_{I_n}(z,w)=\bs{d}_D(z,w).
$$
Let $(D_n)_{n\in\mb{N}}$ be a sequence of bounded domains in $\mb{C}^m$ such that 
$$
\lim_{n\rightarrow\infty} \mf{H}(\overline{D}_n,\overline D)=0
$$
and such that for each compact $K\s D$ there exists an $n_0\in\mb{N}$ such that for any $n\geq n_0, K\s D_n$. Then for any $z,w\in D$
$$
\lim_{n\rightarrow\infty}\bs{d}_{D_n}(z,w)=\bs{d}_D(z,w).
$$
\end{theorem}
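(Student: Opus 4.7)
The strategy is a two-sided squeeze based on the monotonicity built into any holomorphically contractible system: if $U\s V$ are domains, the inclusion $\iota\colon U\hookrightarrow V$ is holomorphic, so $\bs{d}_V(z,w)\leq\bs{d}_U(z,w)$ for $z,w\in U$. Given $z,w\in D$, I would fix $k,l\in\mb{N}$ and try to show that for all sufficiently large $n$
$$
\bs{d}_{E_k}(z,w)\ \leq\ \bs{d}_{D_n}(z,w)\ \leq\ \bs{d}_{I_l}(z,w),
$$
then take $\liminf$ and $\limsup$ in $n$ and let $k,l\to\infty$, invoking the hypothesis on $(E_n)$ and $(I_n)$ to pin both extremes to $\bs{d}_D(z,w)$. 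A preparatory step is to choose $l_0$ so that $z,w\in I_{l_0}$ (available from $\bigcup_l I_l=D$) and restrict attention to $l\geq l_0$; also, since $\{z,w\}$ is a compact subset of $D$, the hypothesis on $(D_n)$ guarantees $z,w\in D_n$ eventually, so the middle quantity makes sense.

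The upper bound is easy. For $l\geq l_0$, the set $\overline{I_l}$ is a compact subset of $D$ (using $I_l\s\s I_{l+1}\s D$), so by the stated assumption on $(D_n)$ we have $I_l\s\overline{I_l}\s D_n$ for all large $n$, and monotonicity gives $\bs{d}_{D_n}(z,w)\leq \bs{d}_{I_l}(z,w)$.

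The lower bound is the geometric heart of the argument. For fixed $k$, $E_{k+1}\s\s E_k$ means $\overline{E_{k+1}}$ is a compact subset of $E_k$, hence $\mathrm{dist}\bigl(\overline{E_{k+1}},\,\mb{C}^m\setminus E_k\bigr)=:\delta_k>0$. Since $\overline{D}\s\bigcap_n E_n\s E_{k+1}$, this yields $\overline{D}^{(\delta_k)}\s E_k$. The Hausdorff convergence $\mf{H}(\overline{D}_n,\overline{D})\to 0$ then gives $\overline{D}_n\s \overline{D}^{(\delta_k)}\s E_k$ for all large $n$, and monotonicity delivers $\bs{d}_{E_k}(z,w)\leq \bs{d}_{D_n}(z,w)$.

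The main obstacle is precisely this last inclusion: Hausdorff closeness of $\overline{D}_n$ to $\overline{D}$ by itself does not produce $D_n\s E_k$, and the point is to use the \emph{strict} nesting $E_{k+1}\s\s E_k$ as a buffer that converts a metric Hausdorff bound into a set-theoretic inclusion. Everything after that is formal manipulation of the sandwich.
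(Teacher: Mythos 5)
Your proposal is correct. Both you and the paper prove the theorem by the same underlying squeeze: the contraction property applied to inclusions gives $\bs{d}_{V}\leq\bs{d}_{U}$ on $U$ whenever $U\s V$, and one traps $\bs{d}_{D_n}(z,w)$ between values of the system on outer domains (coming from $(E_k)$) and inner domains (coming from $(I_l)$). The two inclusion arguments you give are exactly the ones needed and are sound: the inner inclusion $I_l\s D_n$ for large $n$ follows from the compact-exhaustion hypothesis applied to $\overline{I_l}$, and the outer inclusion $\overline{D}_n\s E_k$ for large $n$ follows from $\overline{D}\s E_{k+1}\s\s E_k$, which supplies the positive buffer $\delta_k$ that turns the Hausdorff bound into a genuine inclusion. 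This last point is indeed the only nontrivial geometric step, and you identified it correctly.

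Where you differ from the paper is in how the sandwich is organized. The paper fixes nothing: it inductively builds two auxiliary sequences $(L_n)$ and $(U_n)$, indexed in lockstep with a tail of $(D_n)$, satisfying $L_n\s D_{m_1+n-1}\s U_n$ with $L_n\nearrow D$ and $U_n\searrow\overline{D}$; this requires a case analysis (repeating an $I_M$ several times, and interpolating new relatively compact domains between consecutive $E_M$'s when the $D_n$ converge slowly), after which a single passage to the limit finishes the proof. You instead fix $k$ and $l$, establish the two inclusions for all large $n$, take $\liminf$ and $\limsup$ in $n$, and only then let $k,l\to\infty$. The iterated-limit formulation buys you a shorter and cleaner argument with no construction at all, at the mild cost of invoking the monotone-convergence hypotheses twice (once for each one-sided bound) rather than reading the conclusion off a single two-sided sequence. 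Both are complete proofs; yours is arguably the more economical write-up.
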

In particular, we get the results in this spirit for Carathéodory and Kobayashi pseudodistances as well as for Green and Lempert functions (cf. Corollaries \ref{corkoblem}, \ref{corcara}, and \ref{corgre}). We believe they are interesting in their own.\\ 
\indent In  [\ref{BKS}] all the results are settled in the context of complex Banach spaces, yet under strong assumption about the convexity of the approximating domains together with the limiting one. Our results are free from this restrictive assumption.\\
\indent In Section 2 we give the formal definition of holomorphically contractible system and both list and prove the corollaries form Theorem \ref{hd}, while the proof of the main result itself comes in Section 3.\\
\indent In what follows, $\mc{O}(D,G)$ stands for the family of all holomorphic mappings between open sets $D,G$ and $\mc{PSH}(D)$ abbreviates the family of all plurisubharmonic functions on open set $D$.
\section{Holomorphically contractible systems}
Let us start with the precise definition of holomorphically contractible system.
\begin{df}[Cf. {[\ref{JPIDAMICA}]}, Section 4.1]\label{holocontr}
A family $(\bs{d}_D)$ of functions
$$
\bs{d}_D:D\times D\rightarrow[0,+\infty),
$$
where $D$ runs over all domains in $\mb{C}^n$ with arbitrary $n$, is called a \emph{holomorphically contractible system} if the following two conditions are satisfied:
\begin{enumerate}
\item $\bs{d}_{\mb{D}}=\bs{p}$
\item for any two domains $D\s\mb{C}^n,G\s\mb{C}^m$ and any mapping $f\in\mc{O}(D,G)$ there is
$$
\bs{d}_G(f(z),f(w))\leq\bs{d}_D(z,w), \quad z,w\in D.
$$
\end{enumerate}
\end{df}
\begin{rem}
If in the above definition we replace $\bs{p}$ by $\bs{m}$, the Möbius distance on $\mb{D}$, then we speak of \emph{$\bs{m}$-contractible system}. This distinction is however somewhat artificial, since having $(\bs{d}_D)$, a holomorphically contractible system, we may define $\bs{d}_D^{*}:=\tanh \bs{d}_D$ and then the operator sending $(\bs{d}_D)$ to $(\bs{d}_D^{*})$ is a bijection between the class of contractible systems and the class of $\bs{m}$-contractible systems (see {[\ref{JPIDAMICA}]}, Section 4.1).
\end{rem}
The most important examples of holomorphically contractible systems are the following:
\begin{enumerate}
\item \emph{Carathéodory pseudodistance}:
$$
\bs{c}_D(z,w):=\sup\{\bs{p}(0,f(w)): f\in\mc{O}(D,\mb{D}), f(z)=0\},\quad z,w\in D.
$$
\item \emph{Lempert function}:
\begin{multline*}
\bs{l}_D(z,w):=\\ \inf\{\bs{p}(0,\lambda):\lambda\in{\mb{D}: \text{\ there\ exists\ a\ }\varphi\in\mc{O}(\mb{D},D)}:\varphi(0)=z,\varphi(\lambda)=w\},\quad z,w\in D.
\end{multline*}
\item \emph{Kobayashi pseudodistance}:
\begin{multline*}
\bs{k}_D(z,w):=\\ \inf\big\{\sum_{j=1}^N\bs{l}_D(z_{j-1},z_j):N\in\mb{N},z_1,\ldots,z_N\in D,z_0=z, Z_N=w\big\}, \quad z,w\in D.
\end{multline*}
\item \emph{Green function}:
\begin{multline*}
\bs{g}_D(z,w):=\sup\{u(z):u:D\rightarrow[0,1): \log u\in\mc{PSH}(D),\text{\ there\ exist\ }M,r>0:\\u(z)\leq M\|\zeta-w\|,\zeta\in\mb{B}(w,r)\s D\}
\end{multline*}
forms an example of $\bs{m}$-contractible system.
\end{enumerate}
Note that $(\bs{c}_D)$ and $(\bs{k}_D)$ are extremal holomorphically contractible systems of pseudodistances, i.e. if $(\bs{d}_D)$ is any holomorphically contractible system of pseudodistances, it verifies the inequalities
$$
\bs{c}_D\leq\bs{d}_D\leq\bs{k}_D
$$ 
for all domains $D$. Similarly, if $(\bs{d}_D)$ is any holomorphically contractible system of functions, then
$$
\bs{c}_D\leq\bs{d}_D\leq\bs{l}_D
$$
for all domains $D$ (see [\ref{JPIDAMICA}], Section 4.1).\\
\indent Having Theorem \ref{hd} we may settle the continuity results for particular objects. 
\begin{cor}\label{corkoblem}
Let $D\s\mb{C}^m$ be a bounded taut domain with boundary of class $\mc{C}^{1,1}$. Let $(D_n)_{n\in\mb{N}}$ be a sequence of bounded domains in $\mb{C}^m$ such that 
$$
\lim_{n\rightarrow\infty}\mf{H}(\overline{D}_n,\overline D)=0
$$
Assume that for each compact $K\s D$ there exists an $n_0\in\mb{N}$ such that for any $n\geq n_0, K\s D_n$. Then for any $z,w\in D$
$$
\lim_{n\rightarrow\infty}\boldsymbol{k}_{D_n}(z,w)=\boldsymbol{k}_D(z,w)
$$
as well as
$$
\lim_{n\rightarrow\infty}\boldsymbol{l}_{D_n}(z,w)=\boldsymbol{l}_D(z,w)
$$
\end{cor}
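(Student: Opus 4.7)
The plan is to apply Theorem \ref{hd} twice, once to the holomorphically contractible system $(\boldsymbol{k}_D)$ and once to $(\boldsymbol{l}_D)$. Since both systems are holomorphically contractible, the only things to verify are the existence of the interior and exterior approximating sequences $(I_n)$ and $(E_n)$ satisfying the convergence hypothesis of Theorem \ref{hd}; once these are in place, the conclusion for the Hausdorff-convergent sequence $(D_n)$ follows by direct application.

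For the interior sequence I would take a standard compact exhaustion, e.g.\ letting $I_n$ be the connected component containing a fixed base point of
\[
\{z\in D:\mathrm{dist}(z,\partial D)>1/n\}\cap\mb{B}(0,n).
\]
Then $I_n\s\s I_{n+1}$ and $\bigcup_n I_n=D$. For fixed $z,w\in D$ the convergence $\boldsymbol{l}_{I_n}(z,w)\to\boldsymbol{l}_D(z,w)$ follows because a near-extremal holomorphic disc $\varphi\in\mc{O}(\mb{D},D)$ joining $z$ to $w$ has compact image in $D$, which therefore lies in $I_n$ for all $n$ large. The Kobayashi case is an immediate consequence via the chain definition: a near-extremal analytic chain connecting $z$ to $w$ uses finitely many discs whose combined images are relatively compact in $D$, hence are captured by $I_n$ eventually.

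For the exterior sequence I would use the $\mc{C}^{1,1}$ regularity of $\partial D$: pick a $\mc{C}^{1,1}$ defining function $\rho$ of $D$ with $\nabla\rho\neq 0$ on $\partial D$, and set $E_n:=\{\rho<\varepsilon_n\}$ for a strictly decreasing sequence $\varepsilon_n\downarrow 0$ chosen so that $E_{n+1}\s\s E_n$. Each $E_n$ is a bounded $\mc{C}^{1,1}$-smooth domain and $\bigcap_n E_n=\overline D$. The convergences $\boldsymbol{k}_{E_n}(z,w)\to\boldsymbol{k}_D(z,w)$ and $\boldsymbol{l}_{E_n}(z,w)\to\boldsymbol{l}_D(z,w)$ on $D\times D$ are then the classical monotone-decreasing continuity of these invariants on bounded taut domains with $\mc{C}^{1,1}$ boundary, recalled from [\ref{KCOIDODD}] in the introduction.

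The main obstacle is precisely this exterior step: it is here that the tautness and $\mc{C}^{1,1}$ hypotheses are used in an essential way, as neither $\boldsymbol{k}$ nor $\boldsymbol{l}$ is in general continuous under monotone decreasing sequences of domains. Once this monotone-decreasing continuity is granted, the interior approximation and the final invocation of Theorem \ref{hd} are routine and deliver both convergences simultaneously.
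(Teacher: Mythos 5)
Your proposal is correct and follows essentially the same route as the paper: both construct an interior exhaustion by distance-to-boundary sublevel sets and an exterior shrinking family of neighbourhoods of $\overline D$ (the paper uses the metric envelopes $D^{(1/(N_0+n))}$, you use sublevel sets of a defining function, which is the same idea), then invoke the monotone increasing/decreasing continuity of $\boldsymbol{k}$ and $\boldsymbol{l}$ from [\ref{KCOIDODD}] before applying Theorem \ref{hd}. The extra detail you supply (the direct compactness argument for the increasing case, and the observation that tautness and $\mc{C}^{1,1}$ regularity are needed precisely for the decreasing case) is consistent with, and slightly more explicit than, the paper's proof.
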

\begin{proof}
Indeed, in virtue of the regularity assumption, one can take
$$
E_n=D^{(\frac{1}{N_0+n})}:=\{z\in\mb{C}^m:\text{\ dist}(z,D)<\frac{1}{N_0+n}\}
$$
and
$$
I_n=D^{(-\frac{1}{N_0+n})}:=\{z\in D:\text{\ dist}(z,\partial{D})>\frac{1}{N_0+n}\}
$$
with $N_0\in\mb{N}$ large enough and make use of the continuity of Kobayashi pseudodistance and Lempert function with respect to monotonic sequences of domains (see {[\ref{KCOIDODD}]} and references therein).
\end{proof}
\begin{cor}\label{corcara}
Let $D\s\mb{C}^m$ be a bounded strictly pseudoconvex domain with boundary of class $\mc{C}^{2}$. Let $(D_n)_{n\in\mb{N}}$ be a sequence of bounded domains in $\mb{C}^m$ such that 
$$
\lim_{n\rightarrow\infty}\mf{H}(\overline{D}_n,\overline D)=0
$$
Assume that for each compact $K\s D$ there exists an $n_0\in\mb{N}$ such that for any $n\geq n_0, K\s D_n$. Then for any $z,w\in D$
$$
\lim_{n\rightarrow\infty}\boldsymbol{c}_{D_n}(z,w)=\boldsymbol{c}_D(z,w).
$$
\end{cor}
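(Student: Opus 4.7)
The plan is to reduce directly to Theorem \ref{hd} along the same lines as the proof of Corollary \ref{corkoblem}, but with the regularity hypotheses on $D$ chosen to accommodate the Carath\'eodory pseudodistance instead of the Kobayashi one. Concretely, I take
$$
E_n := D^{(1/(N_0+n))}, \qquad I_n := D^{(-1/(N_0+n))}
$$
with $N_0 \in \mb{N}$ sufficiently large. Since $D$ is bounded strictly pseudoconvex with $\mc{C}^2$ boundary, a standard argument based on the signed distance function shows that strict pseudoconvexity and $\mc{C}^2$ regularity propagate to small inward and outward perturbations; hence, for $N_0$ large enough, each $E_n$ and each $I_n$ is again a bounded strictly pseudoconvex domain with $\mc{C}^2$ boundary. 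The set-theoretic requirements $E_{n+1}\s\s E_n$, $I_n\s\s I_{n+1}$, $\bigcap_n E_n=\overline{D}$ and $\bigcup_n I_n=D$ are immediate from the definition of the $\varepsilon$-envelope and the positivity of the distance to $\partial D$ on compact subsets of $D$.

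Next I verify the two limit conditions of Theorem \ref{hd} for $\bs{d}_D=\bs{c}_D$. The interior convergence $\lim_{n\to\infty}\bs{c}_{I_n}(z,w)=\bs{c}_D(z,w)$ holds for any bounded domain and follows from the contraction property together with a Montel-type normal families argument applied to an extremal sequence in $\mc{O}(D,\mb{D})$; no regularity is needed here. The exterior convergence $\lim_{n\to\infty}\bs{c}_{E_n}(z,w)=\bs{c}_D(z,w)$ is the delicate direction, and this is precisely the step for which the strict pseudoconvexity and $\mc{C}^2$-regularity are invoked: they guarantee that bounded holomorphic functions on $D$ can be approximated by bounded holomorphic functions on slightly larger strictly pseudoconvex neighborhoods, so that extremal functions for $\bs{c}_D$ lift to the $E_n$ in the limit. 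This monotonic continuity of $\bs{c}$ under such decreasing sequences is the content of the classical results recalled in [\ref{KCOIDODD}].

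With both convergence identities in place and the hypotheses on the sequence $(D_n)$ (Hausdorff convergence of closures and eventual compact containment) supplied directly by the statement, all assumptions of Theorem \ref{hd} are satisfied, and the desired conclusion $\lim_{n\to\infty}\bs{c}_{D_n}(z,w)=\bs{c}_D(z,w)$ follows at once.

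The step I expect to be the main obstacle is the exterior convergence $\bs{c}_{E_n}(z,w)\to\bs{c}_D(z,w)$; this is exactly where Corollary \ref{corcara} must depart from Corollary \ref{corkoblem}, since tautness and $\mc{C}^{1,1}$ boundary are not known to suffice for Carath\'eodory, while strict pseudoconvexity with $\mc{C}^2$ boundary does give access to the approximation machinery (Henkin-Ram\'irez kernels, Forn\ae ss embedding, etc.) needed to push extremal disks from $D$ to the slightly larger $E_n$. Once this monotonic continuity is invoked as a known input, the remainder of the argument is purely formal.
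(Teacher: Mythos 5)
Your proposal is correct and matches the paper's argument: the paper proves Corollary \ref{corcara} by the same reduction as Corollary \ref{corkoblem}, taking $E_n=D^{(1/(N_0+n))}$ and $I_n=D^{(-1/(N_0+n))}$ and invoking the known monotonic continuity of the Carath\'eodory pseudodistance from [\ref{KCOIDODD}], which is exactly what you do. Your additional commentary on why the interior limit is regularity-free while the exterior limit needs strict pseudoconvexity with $\mc{C}^2$ boundary is a correct elaboration of what the paper leaves implicit in its citation.
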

\begin{proof}
The proof goes along the same lines as the proof of Corollary \ref{corkoblem}.
\end{proof}
In the case of Green function, things go a little bit more complicated. Let us see the details.
\begin{df}\label{strhyp}
Let $D\s\mb{C}^m$ be a bounded domain.
\begin{enumerate}
\item $D$ is \emph{hyperconvex} if there exists a continuous and negative plurisubharmonic exhaustive function on $D$.
\item $D$ is \emph{strictly hyperconvex} if there exist a bounded domain $\Omega$ and a continuous function $\rho\in\mc{PSH}(\Omega)$ with values in $(-\infty, 1)$ such that $D=\{z\in\Omega:\rho(z)<0\}$, $\rho$ is exhaustive for $\Omega$, and the sublevel sets $\{z\in\Omega:\rho(z)<\alpha\}$ are connected for $\alpha\in[0,1]$. 
\end{enumerate}
\end{df}
One can observe that strictly hyperconvex domain is a hyperconvex domain with negative continuous exhaustive function that can be plurisubharmonically and continuously extended to some open neighbourhood of the closure of the domain. The examples of such domains are bounded strictly pseudoconvex domains with $\mc{C}^2$ boundary.  
\begin{cor}\label{corgre}
Let $D\s\mb{C}^m$ be a strictly hyperconvex domain. Let $\rho$ be as in Definition \ref{strhyp}. Assume that $D^k$ is a hyperconvex domain given by $\{z\in\Omega:\rho(z)<\frac{1}{k}\}, k\in\mb{N}$. Let $(D_n)_{n\in\mb{N}}$ be a sequence of bounded domains in $\mb{C}^m$ such that 
$$
\lim_{n\rightarrow\infty}\mf{H}(\overline{D}_n,\overline D)=0
$$
Assume that for each compact $K\s D$ there exists an $n_0\in\mb{N}$ such that for any $n\geq n_0, K\s D_n$. Then for any $z,w\in D$
$$
\lim_{n\rightarrow\infty}\boldsymbol{g}_{D_n}(z,w)=\boldsymbol{g}_D(z,w).
$$
\end{cor}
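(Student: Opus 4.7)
The plan is to apply Theorem~\ref{hd} to the holomorphically contractible system $\bs{d}_D:=\tanh^{-1}\bs{g}_D$ produced from $(\bs{g}_D)$ via the $\tanh$-bijection of the Remark, and then to transfer the conclusion back to $(\bs{g}_D)$ by continuity of $\tanh$. The task is thus reduced to exhibiting admissible sequences $(E_n)$, $(I_n)$ for the Green function and verifying the two monotone continuity conditions that feed into the theorem.

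For the construction of $E_n$ and $I_n$, I would use $\rho$ and $\Omega$ from Definition~\ref{strhyp}(2). Pick $N_0\in\mb{N}$ so large that $\{\rho<1/N_0\}\s\s\Omega$ and $\{\rho<-1/N_0\}$ is nonempty, and set
$$
E_n:=\{z\in\Omega:\rho(z)<1/(N_0+n)\}=D^{N_0+n},\qquad I_n:=\{z\in D:\rho(z)<-1/(N_0+n)\}
$$
(replacing $I_n$ by the connected component containing a fixed base point $z_0\in D$ if necessary). The connectedness of sublevels in Definition~\ref{strhyp}(2) ensures that each $E_n$ is a domain, hyperconvex by the hypothesis on $D^k$. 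Each $I_n$ is a domain and is hyperconvex via the continuous negative plurisubharmonic exhaustion $\rho+1/(N_0+n)$. Continuity of $\rho$ gives $E_{n+1}\s\s E_n$, $\bigcap_n E_n=\overline D$ and $I_n\s\s I_{n+1}$; the identity $\bigcup_n I_n=D$ follows from the fact that any path in $D$ joining $z_0$ to a given point $w\in D$ is compact and so bounded away from $\{\rho=0\}$, forcing $w$ into the relevant component for large $n$.

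It remains to verify $\lim_n\bs{g}_{E_n}(z,w)=\lim_n\bs{g}_{I_n}(z,w)=\bs{g}_D(z,w)$ for every $z,w\in D$. The increasing direction $\bs{g}_{I_n}\to\bs{g}_D$ is the standard monotone continuity of the pluricomplex Green function along an exhaustion by hyperconvex subdomains: the inequality $\bs{g}_{I_n}\geq\bs{g}_D|_{I_n\times I_n}$ is immediate from restriction of competitors, and the matching upper bound is classical (cf.\ Jarnicki--Pflug). The decreasing direction $\bs{g}_{E_n}\to\bs{g}_D$ is the delicate step and is where the strict hyperconvexity of $D$ is decisive: it is the classical continuity-from-outside result for Green functions of hyperconvex domains, which requires that the defining function $\rho$ be available on a neighbourhood of $\overline D$ so that plurisubharmonic competitors for $\bs{g}_D$ can be deformed by adding a small multiple of $\rho+\mathrm{const}$ and thereby extended as admissible competitors on each $E_n$. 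Without strict hyperconvexity no such extension is available, and the required upper bound on $\bs{g}_{E_n}$ may fail; this is the main obstacle, and it is exactly the one that the hypothesis of the corollary is designed to overcome.

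With the two monotone limits established, Theorem~\ref{hd} applied to the holomorphically contractible system $(\tanh^{-1}\bs{g}_D)$ and the given sequence $(D_n)$ yields $\tanh^{-1}\bs{g}_{D_n}(z,w)\to\tanh^{-1}\bs{g}_D(z,w)$, and the continuity of $\tanh$ on $[0,+\infty)$ concludes the proof.
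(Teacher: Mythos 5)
Your proposal follows essentially the same route as the paper: both feed Theorem~\ref{hd} with the outer sequence $E_n=D^{N_0+n}=\{\rho<1/(N_0+n)\}$ (continuity from outside via strict hyperconvexity, the paper citing Van Trao--Hue Minh) and an inner exhaustion of $D$ for which monotone continuity of the Green function is classical (the paper citing Nivoche and using a smooth strictly pseudoconvex exhaustion where you use sublevel sets of $\rho$). Your explicit handling of the $\bs{m}$-contractible versus holomorphically contractible issue via $\tanh^{-1}$ is a correct extra detail that the paper leaves implicit.
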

\begin{proof}
By [\ref{NTPGF}] we know that the Green function is continuous with respect to increasing sequences of domains. Therefore, $(I_n)_{n\in\mb{N}}$ may be chosen as some exhausting sequence of smoothly bounded strictly pseudoconvex relatively compact open subsets of $D$. Also, using results of [\ref{NVTTHM}], it is clear that the good candidate for the "exterior" sequence is $(E_n)_{n\in\mb{N}}:=(D^n)_{n\in\mb{N}}$. 
\end{proof}
\section{Proof of Theorem \ref{hd}}

\begin{proof}[Proof of Theorem \ref{hd}]
There exists an $m_{1}\in\mb{N}$ such that for $m\geq m_{1}$ we have
$$
I_1 \s\s D_m \s\s E_1.
$$
We may choose the smallest possible such an $m_{1}$. In what follows, we shall construct two sequences of sets, 
$(L_n)_{n\in\mb{N}}, (U_n)_{n\in\mb{N}}$, such that $L_n\s L_{n+1},n\in\mb{N}, \bigcup_{n=1}^{\infty} L_n=D, U_{n+1}\s\s U_n,n\in\mb{N}, \bigcap_{n=1}^{\infty} U_n=\overline{D}$ and
$$
L_n\s D_{m_{1}+n-1}\s U_n, n\in\mb{N}.
$$
Then for $n$ large enough, $z,w\in L_n$ and
$$
\boldsymbol{d}_{U_n}(z,w)\leq\boldsymbol{d}_{D_{m_{1}+n-1}}(z,w)\leq\boldsymbol{d}_{L_n}(z,w).
$$
Finally, letting $n\rightarrow\infty$ and using the assumptions concerning continuity of system $(\bs{d}_D)$ with respect to  monotonic sequences of domains $(I_n)_{n\in\mb{N}}, (E_n)_{n\in\mb{N}}$, we reach the conclusion of Theorem \ref{hd}. Let us pass to the construction.\\
Let $L_1:=I_1, U_1:=E_1$. We proceed as follows:\\
Choose the smallest $m_{2}\in\mb{N}$ such that for any $m\geq m_{2}$ we have
$$
I_2\s\s D_m \s\s E_2.
$$
There are two cases to be considered:\\
\emph{Case 1}. $m_{2}\in\{m_{1},m_{1}+1\}$. Then
$$
I_2\s\s D_{m_{2}} \s\s E_2
$$
and we put $L_2:=I_2, U_2:=E_2$.\\
\emph{Case 2}. $m_{2}=m_{1}+s$ with some $s\geq 2$. Then
$$
I_1\s\s D_l \s\s E_1, l=m_{1},\ldots,m_{1}+s,
$$
and so
$$
I_1\s\s \bigcup_{l=m_{1}+1}^{m_{1}+s-1}D_l \s\s E_1.
$$
We define $L_2=\ldots=L_s:=I_1, L_{s+1}:=I_2$. Further, as $U_2$ we choose a domain relatively compact in $E_1$, containing in its interior $\overline{E_2\cup\bigcup_{l=m_{1}+1}^{m_{1}+s-1}D_l}$. Inductively, for $k=2,\ldots,s$ a domain $U_k$ is chosen as a domain relatively compact in $U_{k-1}$, containing in its interior $\overline{E_2\cup\bigcup_{l=m_{1}+k-1}^{m_{1}+s-1}D_l}$. Finally, we put $U_{s+1}:=E_2$.\\
Suppose we have constructed domains $L_1\s \ldots\s L_r$ and $U_1\s\s \ldots\s\s U_r$ such that
$$
L_j\s D_{m_{1}+j-1}\s U_j, j=1,\ldots, r
$$
and $L_r=I_M, U_r=E_M, m_{1}+r-1=m_M$ with some $M\in\mb{N}$ We choose the smallest $m_{M+1}\in\mb{N}$ with
$$
I_{M+1}\s\s D_m \s\s E_{M+1}, m\geq m_{M+1}.
$$
Similarly as before, there are two cases to be considered:\\
\emph{Case 1}. $m_{M+1}\in\{m_M,m_M+1\}$. Then we put $L_{r+1}:=I_{M+1}, U_{r+1}:=E_{M+1}$.\\
\emph{Case 2}. $m_{M+1}=m_M+s$ with some $s\geq 2.$ Then we mimic the previously presented construction with necessary modifications.
\end{proof}

\end{document}